\theoremstyle{plain}
\newtheorem{theorem}{Theorem}
\numberwithin{equation}{section}
\newcommand{\CC}{C_Q}
\begin{document}

\title {Q-tableaux for Implicational Propositional Calculus}

\date{}

\author[P.L. Robinson]{P.L. Robinson}

\address{Department of Mathematics \\ University of Florida \\ Gainesville FL 32611  USA }

\email[]{paulr@ufl.edu}

\subjclass{} \keywords{}

\begin{abstract}

We study $Q$-tableaux and axiom systems that they engender, producing a new proof that the Implicational Propositional Calculus is complete. 

\end{abstract}

\maketitle

\section{Introduction} 

In a recent paper [4] we showed how completeness of the Implicational Propositional Calculus (IPC) may be established by means of `dual $Q$-tableaux' and their associated axiom systems; here, we study `$Q$-tableaux' themselves and the axiom systems to which they give rise. There are interesting differences between the two types of axiom systems: on the one hand, those arising from dual $Q$-tableaux are based on disjunction, which may be defined within IPC; on the other hand, those arising from $Q$-tableaux are based on conjunction, which only appears in IPC by (partial or residual) proxy. Among other technical differences, our proof that theorems of the $Q$-tableau axiom systems are provable within IPC assigns a much more pervasive r\^ole to the Peirce axiom scheme as a weak law of the excluded middle. 

\section{Q-tableaux for IPC}

\medbreak 

The Implicational Propositional Calculus (IPC) has a single connective ($\supset$) and a single inference rule (modus ponens or MP) with three axiom schemes: \par 
(${\rm IPC}_1$) \; \; $X \supset (Y \supset X)$\par 
(${\rm IPC}_2$) \; \; $[X \supset (Y \supset Z)] \supset [(X \supset Y) \supset (X \supset Z)]$ \par 
(Peirce) \;  $[(X \supset Y) \supset X] \supset X$. \\
As usual, $\vdash$ will signify deducibility within IPC; further, $\mathbb{T}$ will denote the set of theorems of IPC. In particular, the statements $\vdash X$ and $X \in \mathbb{T}$ are effectively synonymous. We remark that the Deduction Theorem (DT) and Hypothetical Syllogism (HS) are valid in IPC as derived inference rules; they will be used (perhaps silently) throughout this paper. Exercises 6.3-6.5 of [2] provide a convenient do-it-yourself introduction to IPC. 

\medbreak 

Fix a (well-formed) formula $Q$ of IPC and when $Z$ is any IPC formula write
$$Q Z := Q(Z) := Z \supset Q$$
so that $QQ Z = (Z \supset Q) \supset Q$ and so forth. 

\medbreak 

\begin{theorem} \label{Robbin} 
Each of the following is an IPC theorem scheme: \par
{\rm (1)} $(X \supset Y) \supset [(Y \supset Z) \supset (X \supset Z)]$ \par 
{\rm (2)} $(X \supset Y) \supset (QY \supset QX)$ \par 
{\rm (3)} $X \supset QQX$ \par
{\rm (4)} $QQQX \supset QX$ \par 
{\rm (5)} $QQY \supset QQ(X \supset Y)$ \par 
{\rm (6)} $QQX \supset [QY \supset Q(X \supset Y)]$ \par 
{\rm (7)} $QX \supset QQ(X \supset Y)$ \par 
{\rm (8)} $(QX \supset Y) \supset [(QQX \supset Y) \supset QQY].$ 
\end{theorem}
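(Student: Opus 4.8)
The plan is to prove the eight schemes in the order listed, using the earlier ones to bootstrap the later ones and invoking the Deduction Theorem (DT), Modus Ponens, and Hypothetical Syllogism (HS) freely, as the text licenses. Most of the labour is simply unwinding the abbreviation $QZ = Z \supset Q$ (so that, for instance, $QQX$ is $(X\supset Q)\supset Q$ and $QQQX$ is $((X\supset Q)\supset Q)\supset Q$) and then chaining implications.

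For the routine cases I would argue as follows. Scheme (1) is the formula form of HS, obtained by applying DT to the derivation $\{X\supset Y,\; Y\supset Z,\; X\}\vdash Z$. Scheme (2) is precisely the instance of (1) with $Z:=Q$. Scheme (3) follows by DT from the derivation $\{X,\; X\supset Q\}\vdash Q$. Scheme (4) comes from (3) and (2): the relevant instance of (2) is $(X\supset QQX)\supset(QQQX\supset QX)$, and discharging its antecedent — which is exactly (3) — by Modus Ponens yields (4). Scheme (5) follows from $({\rm IPC}_1)$ in the form $Y\supset(X\supset Y)$ by two uses of (2): first to get $Q(X\supset Y)\supset QY$, then again to get $QQY\supset QQ(X\supset Y)$. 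Schemes (6) and (8) are short DT arguments powered by HS: for (6), from $QQX$, $QY$ and $X\supset Y$ one forms $QX$ by HS on $X\supset Y$ and $QY$, then applies $QQX$ to reach $Q$; for (8), from $QX\supset Y$, $QQX\supset Y$ and $Y\supset Q$ one forms $QX\supset Q$ (which is literally $QQX$) by HS, feeds that to $QQX\supset Y$ to reach $Y$, and then applies $Y\supset Q$ to reach $Q$; in each case DT discharges the hypotheses.

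The one case of a genuinely different flavour, and the step I expect to be the main obstacle, is scheme (7); this is where the Peirce scheme is needed, functioning as a weak law of the excluded middle in the spirit of the introduction. Unwinding abbreviations, (7) asserts $(X\supset Q)\supset\big[((X\supset Y)\supset Q)\supset Q\big]$, so by DT it suffices to derive $Q$ from the hypotheses $QX$ and $Q(X\supset Y)$. The idea is to produce the implication $(Q\supset Y)\supset Q$ first: assuming $Q\supset Y$ and chaining it after $QX = X\supset Q$ gives $X\supset Y$ by HS, whence $Q(X\supset Y)$ yields $Q$, and DT delivers $(Q\supset Y)\supset Q$; now the Peirce instance $\big[(Q\supset Y)\supset Q\big]\supset Q$ (Peirce applied with its first schematic letter instantiated to $Q$) gives $Q$ by Modus Ponens, and two applications of DT finish. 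I anticipate that the one point requiring real thought is recognising that in (7) one should aim at $(Q\supset Y)\supset Q$ and then collapse it via Peirce; every other step throughout the theorem is bookkeeping.
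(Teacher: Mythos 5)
Your derivations are all correct, and each of the eight schemes is handled soundly: (1)--(6) and (8) by DT/MP/HS bookkeeping, (4) and (5) by bootstrapping from (2) and (3), and (7) by isolating $(Q \supset Y) \supset Q$ and collapsing it with the Peirce instance $[(Q \supset Y) \supset Q] \supset Q$. The comparison with the paper is somewhat one-sided, because the paper does not actually prove this theorem: it cites Exercise 6.3 of Robbin [2] and declares the proof ``DIY,'' adding only the remark that Peirce is needed solely for part (7). Your proposal therefore supplies exactly the content the paper omits, and your structural diagnosis --- that (7) is the unique place where Peirce enters, functioning as a weak excluded middle --- coincides precisely with the paper's one substantive comment. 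The only point in the paper's discussion that your write-up does not touch is the converse observation that Peirce is in turn derivable from (7) by taking $Q = X$ and applying MP to $X \supset X$, so that (7) and Peirce are interderivable over ${\rm IPC}_1$, ${\rm IPC}_2$; this is worth recording, since it shows your use of Peirce in (7) is not merely convenient but unavoidable.
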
 

\begin{proof} 
This is Exercise 6.3 in [2] so the proof is DIY.  The only part requiring Peirce is (7) as noted in [2]; more than this, Peirce follows by MP from (7) with $Q = X$ and the fact that $X \supset X \in \mathbb{T}$. 
\end{proof} 

\medbreak 

Disjunction ($\vee$) may be defined within IPC as an abbreviation: thus, 
$$X \vee Y := (X \supset Y) \supset Y.$$
This has the expected properties. For instance, $X \vdash X \vee Y$ (by MP and DT: $X, X \supset Y \vdash Y$ so $X \vdash (X \supset Y) \supset Y$) and $Y \vdash X \vee Y$ (by MP and ${\rm IPC}_1$). Moreover, the Peirce axiom scheme guarantees the following complementary property. 

\medbreak 

\begin{theorem} \label{Elimination}
If $X \vdash Z$ and $Y \vdash Z$ then $X \vee Y \vdash Z.$ 
\end{theorem}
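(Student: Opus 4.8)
The plan is to reduce to a purely propositional implication via the Deduction Theorem, and then to extract the single application of Peirce that is needed from Theorem~\ref{Robbin}(7). From the hypotheses $X \vdash Z$ and $Y \vdash Z$, the Deduction Theorem gives $\vdash X \supset Z$ and $\vdash Y \supset Z$; conversely, by MP it suffices to prove $\vdash (X \vee Y) \supset Z$, that is, $\vdash [(X \supset Y) \supset Y] \supset Z$. The crucial device is to \emph{specialise the parameter $Q$ to $Z$}, so that $QW$ becomes just $W \supset Z$ for every formula $W$; under this specialisation the two hypotheses read $\vdash QX$ and $\vdash QY$.

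First I would feed $\vdash QX$ into Theorem~\ref{Robbin}(7), namely $QX \supset QQ(X \supset Y)$, and apply MP to obtain $\vdash QQ(X \supset Y)$; written out with $Q = Z$ this is $\vdash [(X \supset Y) \supset Z] \supset Z$. This is precisely the step that consumes Peirce --- Theorem~\ref{Robbin} records that (7) is the one part needing it. Next, from the \emph{other} hypothesis $\vdash Y \supset Z$ I would derive $\vdash (X \supset Y) \supset (Y \supset Z)$ by ${\rm IPC}_1$ and MP, and then $\vdash [(X \supset Y) \supset Y] \supset [(X \supset Y) \supset Z]$ by ${\rm IPC}_2$ and MP --- a routine ``monotonicity in the consequent'' deduction. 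Chaining these two facts with Hypothetical Syllogism yields $\vdash [(X \supset Y) \supset Y] \supset Z$, i.e. $\vdash (X \vee Y) \supset Z$; one final MP gives $X \vee Y \vdash Z$.

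I do not anticipate a genuine computational obstacle: once $Q$ is taken to be $Z$, the statement essentially \emph{is} Theorem~\ref{Robbin}(7), padded with the harmless monotonicity step. The only real subtlety is conceptual --- the conclusion is classically valid but intuitionistically false for the defined disjunction $(X \supset Y) \supset Y$, so some appeal to Peirce is unavoidable, and the aim of the argument is to localise that appeal cleanly inside part (7) of the preceding theorem rather than to reprove it by hand. A minor care point is bookkeeping with the direction of the Deduction Theorem, and making sure each hypothesis is used where advertised: the Peirce step uses $\vdash X \supset Z$, while the monotonicity step uses $\vdash Y \supset Z$.
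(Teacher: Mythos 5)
Your proof is correct, and it follows essentially the same route as the argument the paper cites from [3]: everything hinges on the single Peirce-dependent fact $X \supset Z \vdash [(X \supset Y) \supset Z] \supset Z$ (Theorem~\ref{Robbin}(7) with $Q := Z$), combined with a routine HS/monotonicity step using $Y \supset Z$ and the Deduction Theorem. Each individual step (the ${\rm IPC}_1$/${\rm IPC}_2$ derivation, the HS chaining, the final MP) checks out.
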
 

\begin{proof} 
This is Theorem 3 in [3]. 
\end{proof} 

\medbreak 

As an immediate consequence, $\vee$ is `commutative' in the sense that $Y \vee X \vdash X \vee Y$. As a slightly less immediate consequence, $\vee$ is `associative' in the sense that $X \vee (Y \vee Z) \vdash (X \vee Y) \vee Z$ and vice versa. As another consequence, we may rewrite the Peirce axiom scheme equivalently as a weak `law of the excluded middle'; we state this fact as a theorem, primarily for ease of reference. 

\medbreak 

\begin{theorem} \label{middle}
If $Q$ and $Z$ are IPC formulas then $Q Z \vee Z$ is a theorem of IPC. 
\end{theorem}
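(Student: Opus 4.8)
The plan is to prove this by pure unwinding of abbreviations: $QZ \vee Z$ is, after expanding definitions, \emph{literally} an instance of the Peirce axiom scheme, so nothing beyond recognition is needed.

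First I would apply the definition $X \vee Y := (X \supset Y) \supset Y$ with $X := QZ$ and $Y := Z$, which turns $QZ \vee Z$ into $(QZ \supset Z) \supset Z$. Next I would expand $QZ = Z \supset Q$, so the formula becomes $[(Z \supset Q) \supset Z] \supset Z$. Comparing with the Peirce scheme $[(X \supset Y) \supset X] \supset X$, I would take the substitution $X := Z$, $Y := Q$; the two formulas then coincide symbol for symbol, so $QZ \vee Z$ is a theorem of IPC (indeed an axiom instance, requiring neither MP nor DT).

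I do not expect any real obstacle here — the content of the theorem is exactly that Peirce's law, when read through the IPC-definition of disjunction, is the weak law of the excluded middle $QZ \vee Z$. The only point demanding a little care is getting the two substitutions ($X$ versus $QZ$ in the $\vee$-definition, then $X := Z$, $Y := Q$ in Peirce) to line up so that the nested implications match, which is why the statement is recorded separately as a theorem purely for convenient reference in the sequel.
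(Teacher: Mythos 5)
Your proof is correct and matches the paper's own argument exactly: both simply expand the abbreviations to obtain $[(Z \supset Q) \supset Z] \supset Z$ and recognize it as the Peirce axiom instance with $X := Z$, $Y := Q$. Nothing further is needed.
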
 

\begin{proof} 
Rewrite! Thus: 
$$Q Z \vee Z = (Q Z \supset Z) \supset Z = [(Z \supset Q) \supset Z] \supset Z.$$
\end{proof} 

\medbreak 

\noindent
{\bf Remark:} We may use this to infer from $Q Z \supset W \in \mathbb{T}$ and $Z \supset W \in \mathbb{T}$ that $W \in \mathbb{T}$. In fact, if $Q Z \supset W \in \mathbb{T}$ and $Z \supset W \in \mathbb{T}$ then $Q Z \vdash W$ and $Z \vdash W$ by MP so that $Q Z \vee Z \vdash W$ by Theorem \ref{Elimination} and $(Q Z \vee Z) \supset W \in \mathbb{T}$ by DT; now MP and Theorem \ref{middle} place $W$ in $\mathbb{T}$. 

\medbreak 

We shall have need of the following extension to Theorem \ref{Robbin}. 

\medbreak 

\begin{theorem} \label{extra}
Each of the following is an IPC theorem scheme: \par 
$({\rm A}_0)\; \; $ $Q (X \supset Y) \supset QQ X;$ \par 
$({\rm A}_1)\; \; $ $Q (X \supset Y) \supset Q Y;$ \par 
$({\rm B})$\; \;  $QQ (X \supset Y) \supset (QQ Y \vee Q X).$
\end{theorem}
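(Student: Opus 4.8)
The plan is to treat the three schemes separately: $({\rm A}_1)$ is essentially immediate, while $({\rm A}_0)$ and $({\rm B})$ are both to be handled by one and the same case split — namely the weak law of the excluded middle of Theorem \ref{middle} applied with $Q := Y$ and $Z := X$, which makes $(X \supset Y) \vee X$ an IPC theorem. Throughout I would reason with DT and HS freely, use the proof-by-cases principle of Theorem \ref{Elimination} in its obvious deduction-relative form, and keep in mind the unfoldings $QQ(X\supset Y) = [(X\supset Y)\supset Q]\supset Q$, $QQ Y = QY \supset Q$, and $QQY \vee QX = (QQY \supset QX)\supset QX$.

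For $({\rm A}_1)$: since $Y \supset (X \supset Y)$ is an instance of ${\rm IPC}_1$, substituting it into Theorem \ref{Robbin}(2) and applying MP gives $Q(X \supset Y) \supset Q Y$ directly. For $({\rm A}_0)$: by DT it is enough to show $Q(X \supset Y) \vdash QQX$, and I would split on the theorem $(X \supset Y) \vee X$. If $X \supset Y$, the hypothesis $Q(X\supset Y) = (X\supset Y)\supset Q$ gives $Q$ by MP, whence $QQX = QX \supset Q$ follows from ${\rm IPC}_1$. If $X$, then $QX = X\supset Q$ gives $Q$ by MP, so $X \vdash QQX$ by DT. Theorem \ref{Elimination} merges the two branches, and as $(X\supset Y)\vee X \in \mathbb{T}$ we obtain $Q(X\supset Y)\vdash QQX$, hence $({\rm A}_0)$.

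For $({\rm B})$: by DT, assume $M := QQ(X\supset Y)$ and aim for $QQY \vee QX$, i.e. for $(QQY \supset QX)\supset QX$; split once more on $(X\supset Y)\vee X$. In the branch $X \supset Y$ — where $M$ turns out to be unnecessary — I would assume $QQY \supset QX$ and then $X$, deduce $Y$, then $QQY$ by Theorem \ref{Robbin}(3), then $QX$ by MP, then $Q$ by MP against $X$; two applications of DT close this branch. In the branch $X$ — the delicate one — the point is to manufacture the antecedent $(X\supset Y)\supset Q$ in order to fire $M$: assume $QQY \supset QX$; then, under the temporary hypothesis $X\supset Y$, the same chain ($Y$; $QQY$ by \ref{Robbin}(3); $QX$; $Q$) yields $Q$, so DT gives $(X\supset Y)\supset Q$, MP against $M$ gives $Q$, hence $QX = X\supset Q$ (by ${\rm IPC}_1$, or by weakening and DT), and one further DT gives $QQY\vee QX$ in this branch. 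Theorem \ref{Elimination} together with $(X\supset Y)\vee X \in \mathbb{T}$ yields $M \vdash QQY\vee QX$, and DT finishes $({\rm B})$.

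The only real obstacle I anticipate is the $X$-branch of $({\rm B})$: unlike every other case here, no hypothesis discharges on sight, so one must first build $(X\supset Y)\supset Q$ inside a nested deduction before $M$ can be brought to bear, and the accounting of which hypotheses remain in scope needs a little care. In keeping with the paper's theme, it is worth noting that the whole argument leans on the Peirce scheme, which enters through Theorem \ref{middle} (so that $(X\supset Y)\vee X$ is available) and through Theorem \ref{Elimination} (so that the case analyses are legitimate).
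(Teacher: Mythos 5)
Your proof is correct, but for $({\rm A}_0)$ and $({\rm B})$ it takes a genuinely different route from the paper. (Your $({\rm A}_1)$ is identical to the paper's: ${\rm IPC}_1$ fed into Theorem \ref{Robbin}(2).) The paper proves $({\rm A}_0)$ by a pure chain of implications --- $Q(X\supset Y) \supset QQQ(X\supset Y)$ from \ref{Robbin}(3), then $QQQ(X\supset Y)\supset QQX$ by contraposing \ref{Robbin}(7) via \ref{Robbin}(2), then HS --- with no case analysis at all; and it proves $({\rm B})$ by observing the identity $QQ(X\supset Y) = (X\supset Y)\vee Q$ and splitting on the \emph{hypothesis} itself, which makes the second branch ($Q$ holds) entirely trivial since $Q \vdash QX$ by ${\rm IPC}_1$. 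You instead run both $({\rm A}_0)$ and $({\rm B})$ through a single external case split on $(X\supset Y)\vee X$, obtained from Theorem \ref{middle} by the (legitimate but slightly unusual) substitution $Q:=Y$, $Z:=X$ into its schematic letters. This buys uniformity --- one case split serves two schemes --- at the cost of the delicate $X$-branch of $({\rm B})$ that you correctly identify, where a nested deduction must manufacture $(X\supset Y)\supset Q$ before the hypothesis $QQ(X\supset Y)$ can be fired; the paper's decomposition avoids that entirely. Your approach also requires the deduction-relative form of Theorem \ref{Elimination} (case analysis under side hypotheses), which the paper never needs here; as you note this is a routine extension (push the side hypotheses into the conclusion by DT, apply Theorem \ref{Elimination}, then MP them back), so it is not a gap, just extra scaffolding. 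Both arguments lean on Peirce, but through different doors: the paper via \ref{Robbin}(7) for $({\rm A}_0)$ and via Theorems \ref{Elimination} and \ref{Robbin}(3) for $({\rm B})$; you via Theorem \ref{middle} and Theorem \ref{Elimination}.
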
 

\begin{proof} 
$({\rm A}_0)$ From Theorem \ref{Robbin} part (7) we have $Q X \supset QQ (X \supset Y) \in \mathbb{T}$ whence by Theorem \ref{Robbin} part (2) and MP we have $QQQ (X \supset Y) \supset QQ X \in \mathbb{T}$. Theorem \ref{Robbin} part (3) gives us $Q (X \supset Y) \supset QQQ (X \supset Y) \in \mathbb{T}$ and an application of HS gives us $Q (X \supset Y) \supset QQ X \in \mathbb{T}$. \medbreak 
$({\rm A}_1)$ Axiom scheme $({\rm IPC}_1)$ gives us $Y \supset (X \supset Y) \in \mathbb{T}$ and Theorem \ref{Robbin} part (2) gives us $[Y \supset (X \supset Y)] \supset [Q (X \supset Y) \supset Q Y] \in \mathbb{T}$; by MP we deduce that $Q (X \supset Y) \supset Q Y \in \mathbb{T}$. \medbreak 
$({\rm B})$ Note that $QQ (X \supset Y) = [(X \supset Y) \supset Q] \supset Q = (X \supset Y) \vee Q$. We shall prove separately that $X \supset Y \vdash QQ Y \vee Q X$ and that $Q \vdash QQ Y \vee Q X$; an application of Theorem \ref{Elimination} will then conclude the argument. {\it Proof of} $X \supset Y \vdash QQ Y \vee Q X$: Assume $X \supset Y$, $QQ Y \supset Q X$, $X$. Successive applications of MP yield: $Y$ (from $X$ and $X \supset Y$); $QQ Y$ (from $Y$ and $Y \supset QQ Y$ in Theorem \ref{Robbin} part (3)); $Q X$ (from $QQ Y$ and $QQ Y \supset Q X$); $Q$ (from $X$ and $X \supset Q = Q X$). This proves that  $X \supset Y, (QQ Y \supset Q X), X \vdash Q$ and two applications of DT yield $X \supset Y \vdash (QQ Y \supset Q X) \supset QX = QQ Y \vee Q X$. {\it Proof of} $Q \vdash QQ Y \vee Q X$: This is easy: axiom scheme (${\rm IPC}_1$) gives $Q \vdash X \supset Q = Q X$; now $Q, (QQY \supset Q X) \vdash Q X$ so that $Q \vdash (QQ Y \supset  Q X) \supset Q X = QQ Y \vee Q X$ by DT. 
\end{proof} 

\medbreak 

In fact, $(QQ Y \vee Q X) \supset QQ (X \supset Y) \in \mathbb{T}$ too: indeed, Theorem \ref{Robbin} part (5) tells us that $QQ Y \vdash QQ (X \supset Y)$ while Theorem \ref{Robbin} part (7) tells us that $Q X \vdash QQ (X \supset Y)$; all that remains is to invoke Theorem \ref{Elimination} again. 

\medbreak 

Conjunction ($\wedge$) itself may not be definable within IPC, but a shadow of conjunction does exist and this shadow serves our purposes. Our purposes require that within IPC there be available formulas that serve as proxies for expressions of the classical form $\sim (Z_N \wedge \cdots \wedge Z_0)$ where $\sim$ signifies negation. It is abundantly clear from the foregoing development (in particular, Theorem \ref{Robbin} and Theorem \ref{middle}) that the formula $Q Z = Z \supset Q$ has properties akin to those of the negation $\sim Z$; indeed, the framework for classical Propositional Calculus presented in [1] includes a propositional constant $\mathfrak{f}$ (for falsity) and defines $\sim Z$ to be $Z \supset \mathfrak{f}$. Taking into account this function of $Q$ in manufacturing a partial substitute for negation, along with the classical exportation and importation rules, we accordingly make the following definition.

\medbreak 

When $\theta = (Z_N, \dots , Z_0)$ is a sequence of IPC formulas, we define 
$$\CC (\theta) := Z_N \supset ( \cdots ( Z_0 \supset Q) \cdots )$$
where $\CC$ suggests negated conjunction. For convenience, we may omit brackets and write simply 
$$\CC(\theta) = Z_N \supset \cdots \supset Z_0 \supset Q$$
with the understanding that brackets are as displayed above. Observe at once that if $W$ is also an IPC formula and $W, \theta$ stands for the sequence $(W, Z_N, \dots , Z_0)$ then 
$$\CC(W, \theta) = W \supset \CC (\theta)$$
which observation is of course the essence of a formal inductive definition of $\CC$ starting from $\CC (Z_0) = Z_0 \supset Q.$

\medbreak 

The following property of this construction will be needed later. 

\begin{theorem} \label{vdash}
If $0 \leqslant n \leqslant N \in \mathbb{N}$ then $Q Z_n \vdash \CC (Z_N, \dots , Z_0)$. 
\end{theorem}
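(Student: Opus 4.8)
The plan is to derive this directly from the Deduction Theorem, exploiting the fact that $Z_n$ literally occurs among $Z_N, \dots , Z_0$.

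\textbf{Step 1.} Pass to the deductive system obtained by adjoining to the hypothesis $Q Z_n$ the further hypotheses $Z_N, Z_{N-1}, \dots , Z_0$, in that order, so that $Z_N$ is adjoined first and $Z_0$ last. Since $Q Z_n = Z_n \supset Q$ and $Z_n$ is one of the adjoined hypotheses, a single application of MP yields $Q$; thus $Q Z_n, Z_N, Z_{N-1}, \dots , Z_0 \vdash Q$.

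\textbf{Step 2.} Apply DT exactly $N+1$ times, discharging the hypotheses in the reverse of the order in which they were adjoined: discharging $Z_0$ produces $Z_0 \supset Q$, discharging $Z_1$ produces $Z_1 \supset (Z_0 \supset Q)$, and so on, until discharging $Z_N$ produces $Z_N \supset (Z_{N-1} \supset \cdots \supset (Z_0 \supset Q) \cdots)$, which is precisely $\CC(Z_N, \dots , Z_0)$. The only hypothesis left undischarged is $Q Z_n$, so this establishes $Q Z_n \vdash \CC(Z_N, \dots , Z_0)$.

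An alternative is induction on $N$, using the recursion $\CC(W, \theta) = W \supset \CC(\theta)$. The base case $N = 0$ is immediate since $\CC(Z_0) = Q Z_0$. In the inductive step one splits on whether $n \leqslant N-1$, in which case the inductive hypothesis applied to $(Z_{N-1}, \dots , Z_0)$ together with $({\rm IPC}_1)$ and HS does the job, or $n = N$, in which case $Q$ follows from $Q Z_N$ and the hypothesis $Z_N$ by MP, $\CC(Z_{N-1}, \dots , Z_0)$ follows from $Q$ by repeated use of $({\rm IPC}_1)$, and $Z_N$ is then discharged by DT.

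There is no substantive obstacle; the only point demanding care is the ordering of the DT discharges, which must be arranged so that the resulting nested implication carries $Z_N$ as its outermost antecedent and $Z_0$ as its innermost, matching the bracketing convention fixed for $\CC$. In particular, Peirce plays no role here — only $({\rm IPC}_1)$, MP, and DT are used.
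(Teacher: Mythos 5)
Your proof is correct, but it takes a more direct route than the paper's. The paper breaks the argument into three stages: first, $\CC(Z_n,\dots,Z_0) \vdash \CC(Z_N,\dots,Z_0)$ by repeated use of $({\rm IPC}_1)$; second, $Q \vdash \CC(Z_N,\dots,Z_0)$; third, an induction on $n$ showing $Q Z_n \vdash \CC(Z_n,\dots,Z_0)$, whose inductive step uses MP followed by a single DT discharge. Your Step 1 and Step 2 collapse all of this into one observation — that $Q Z_n$ together with all of $Z_N,\dots,Z_0$ yields $Q$ by a single MP — followed by $N+1$ applications of DT discharging $Z_0$ first and $Z_N$ last, so that the nesting matches the bracketing convention for $\CC$. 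This is valid: DT discharges hypotheses one at a time regardless of the order in which they were listed, and the possible coincidence $Z_i = Z_j$ or $Z_n = Z_0$ causes no trouble since DT applies even when the discharged formula already occurs among the remaining hypotheses. What your version buys is brevity and the elimination of the explicit induction (it is absorbed into the iterated DT); what the paper's version buys is the reusable intermediate fact $Q \vdash \CC(Z_N,\dots,Z_0)$ and the stage-by-stage structure that parallels the companion de Morgan results proved immediately afterwards. Your closing remark is also accurate: neither argument needs Peirce, only $({\rm IPC}_1)$, MP and DT.
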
 

\begin{proof} 
We write $C_N = \CC (Z_N, \dots , Z_0)$ for convenience and break the proof into stages.\\
(1) If $0 \leqslant n \leqslant N \in \mathbb{N}$ then $C_n \vdash C_N$. [$C_n \vdash Z_{n + 1} \supset C_n = C_{n + 1}$ is an instance of ${\rm IPC}_1$.] \\
(2) If $N \in \mathbb{N}$ then $Q \vdash C_N$. [$Q \vdash Z_0 \supset Q = C_0$ is an instance of ${\rm IPC}_1$; now invoke (1).]\\
(3) If $0 \leqslant n \in \mathbb{N}$ then $Q Z_n \vdash C_n$. [The base case $n = 0$ is plain: $Q Z_0 = Z_0 \supset Q = C_0$. For the inductive step, hypothesize $Q Z_n \vdash C_n$. From $Q Z_{n + 1} = Z_{n + 1} \supset Q$ and $Z_{n + 1}$ we deduce $Q$ by MP and therefore $C_n$ by (2); thus $Q Z_{n + 1} , Z_{n + 1} \vdash C_n$ and so $Q Z_{n + 1} \vdash Z_{n + 1} \supset C_n = C_{n + 1}$ by DT.]
\medbreak 

 The theorem now follows from (1) and (3).  
\end{proof}

\medbreak 

Equivalently (by DT and MP) $Q Z_n \supset \CC (Z_N, \dots , Z_0)$ is a theorem of IPC. 

\medbreak 

Although we shall not need the following complementary pair of properties, we include them at little cost; they amount to a de Morgan law. The one property is that if $N \in \mathbb{N}$ then 
$$Q Z_0 \vee \cdots \vee Q Z_N \vdash \CC (Z_N, \dots , Z_0)$$
which is a fairly routine inductive consequence of Theorem \ref{Elimination} and Theorem \ref{vdash}. The other property is the following opposite deduction and perhaps calls for a more detailed argument. 

\medbreak 

\begin{theorem} 
If $N \in \mathbb{N}$ then $\CC (Z_N, \dots , Z_0) \vdash Q Z_0 \vee \cdots \vee Q Z_N$. 
\end{theorem}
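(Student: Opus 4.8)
The plan is to argue by induction on $N$, leaning on the weak law of the excluded middle (Theorem \ref{middle}) and on disjunction elimination (Theorem \ref{Elimination}) exactly in the pattern of the Remark following Theorem \ref{middle}. For bookkeeping I would bracket the disjunction to the left, writing $Q Z_0 \vee \cdots \vee Q Z_N$ as $(Q Z_0 \vee \cdots \vee Q Z_{N - 1}) \vee Q Z_N$; this is harmless because $\vee$ is associative up to mutual deducibility. Abbreviate $C_N = \CC(Z_N, \dots, Z_0)$ and $D_N = Q Z_0 \vee \cdots \vee Q Z_N$, so that for $N \geqslant 1$ we have $C_N = Z_N \supset C_{N - 1}$ and $D_N = D_{N - 1} \vee Q Z_N$.

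The base case $N = 0$ is immediate, since $C_0 = Z_0 \supset Q = Q Z_0 = D_0$. For the inductive step, assume $C_{N - 1} \vdash D_{N - 1}$. I would first establish the two deductions $Q Z_N \vdash C_N \supset D_N$ and $Z_N \vdash C_N \supset D_N$. The first: by the property $Y \vdash X \vee Y$ we have $Q Z_N \vdash D_N$, hence a fortiori $Q Z_N, C_N \vdash D_N$, and DT (discharging $C_N$) yields $Q Z_N \vdash C_N \supset D_N$. The second: from $C_N = Z_N \supset C_{N - 1}$ together with $Z_N$, MP gives $C_{N - 1}$; the inductive hypothesis then gives $D_{N - 1}$; and the property $X \vdash X \vee Y$ gives $D_{N - 1} \vee Q Z_N = D_N$; so $Z_N, C_N \vdash D_N$, and DT (discharging $C_N$) yields $Z_N \vdash C_N \supset D_N$. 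Now Theorem \ref{Elimination}, applied with $X = Q Z_N$, $Y = Z_N$, and common conclusion $C_N \supset D_N$, gives $Q Z_N \vee Z_N \vdash C_N \supset D_N$. Finally Theorem \ref{middle} tells us $Q Z_N \vee Z_N$ is a theorem, so MP produces $\vdash C_N \supset D_N$, that is $C_N \vdash D_N$, completing the induction.

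I do not expect a genuine obstacle here; the proof is short. The one point meriting a moment's care is the handling of the standing hypothesis $C_N$: since Theorem \ref{Elimination} is stated with no side hypotheses, it is cleanest to discharge $C_N$ via DT before invoking elimination and then to re-introduce it at the very end through Theorem \ref{middle} and MP, precisely as in the Remark. Combined with the complementary deduction already recorded in the excerpt, this yields the promised de Morgan equivalence $\CC(Z_N, \dots, Z_0) \dashv\vdash Q Z_0 \vee \cdots \vee Q Z_N$.
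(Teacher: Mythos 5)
Your proof is correct, but it routes the inductive step through the Peirce axiom scheme where the paper does not. The paper's inductive step is a single direct deduction: from the three assumptions $C_{N+1}$, $D_N \supset Q Z_{N+1}$, $Z_{N+1}$ it derives $C_N$, then $D_N$ (inductive hypothesis), then $Q Z_{N+1}$, then $Q$, and two applications of DT deliver $C_{N+1} \vdash (D_N \supset Q Z_{N+1}) \supset Q Z_{N+1} = D_{N+1}$. That argument unfolds the definition of $\vee$ at the outermost level and uses only MP, DT and the inductive hypothesis --- in particular it is Peirce-free, so the theorem holds already in the implicational fragment without the Peirce scheme. You instead perform a case analysis on $Q Z_N \vee Z_N$: you show $Q Z_N \vdash C_N \supset D_N$ and $Z_N \vdash C_N \supset D_N$ (both steps check out, including the careful discharge of $C_N$ before invoking Theorem \ref{Elimination}), then combine them via Theorem \ref{Elimination} and Theorem \ref{middle}, both of which rest on Peirce. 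What your version buys is conceptual transparency --- it reads as a semantic case split (either $Z_N$ ``fails,'' giving $Q Z_N$ and hence $D_N$ outright, or $Z_N$ ``holds'' and the outer implication of $C_N$ can be peeled off) --- and it matches the pattern the paper uses elsewhere for Theorems \ref{axioms}, \ref{RuleA} and \ref{RuleB}. What the paper's version buys is economy and a sharper result: it establishes the deduction without any appeal to the excluded middle, the paper's single deduction in effect handling both of your cases at once by using the assumption $Z_{N+1}$ twice.
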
 

\begin{proof} 
For convenience, write $C_N = \CC (Z_N, \dots , Z_0)$ and $D_N = Q Z_0 \vee \cdots \vee Q Z_N$. Plainly, $C_0 = Z_0 \supset Q = Q Z_0 = D_0$. Now take $C_N \vdash D_N$ as inductive hypothesis. The three assumptions $C_{N + 1}, D_N \supset Q Z_{N + 1}, Z_{N + 1}$ yield the following successive deductions: $C_N$ (from $C_{N + 1} = Z_{N + 1} \supset C_N$ and $Z_{N + 1}$ by MP); $D_N$ (from $C_N$ by the inductive hypothesis); $Q Z_{N + 1}$ (from $D_N \supset Q Z_{N + 1}$ and $D_N$); $Q$ (from $Q Z_{N + 1} = Z_{N + 1} \supset Q$ and $Z_{N + 1}$). Thus 
$$C_{N + 1}, D_N \supset Q Z_{N + 1}, Z_{N + 1} \vdash Q$$
and so by two applications of DT we conclude 
$$C_{N + 1} \vdash (D_N \supset Q Z_{N + 1}) \supset Q Z_{N + 1} = D_{N + 1}.$$
\end{proof}

\medbreak 

In order to introduce $Q$-tableaux, we find it convenient to assume knowledge of the theory of tableaux for signed formulas in the classical Propositional Calculus, for details of which we refer to the classic treatise [5]. IPC formulas of Type A have the form $\alpha = F (X \supset Y)$ with $\alpha_0 = T X$ and $\alpha_1 = FY$ as direct consequences, while IPC formulas of type B have the form $\beta = T (X \supset Y)$ with $\beta_0 = F X$ and $\beta_1 = T Y$ as alternative consequences; symbolically, 
$$ \; \; \; \; \; \; \; \; \; \; \; \; \; \; \; \;  \frac{\alpha}{\alpha_0} \; \; \; \frac{\beta}{\beta_0 \; | \; \beta_1}.$$
$$\alpha_1$$
If the IPC formula $Z$ is a tautology (true in all Boolean valuations) then $F Z$ starts a signed tableau that is closed in the sense that each of its branches contains a conjugate pair $TW$, $FW$ of signed formulas. For all of this and much more, see especially Chapter II of [5]. 

\medbreak 

Now, fix a choice of IPC formula $Q$. Let $Z$ be an IPC tautology and construct a closed signed tableau $\mathcal{T}$ starting from $F Z$; in the construction, do not abbreviate $W \supset Q$ as $Q W$. Replace each node in $\mathcal{T}$ of the form $F W$ by $Q W$ and replace each node in $\mathcal{T}$ of the form $T W$ by $QQ W$. The result is a tableau $\mathcal{T}_Q$ starting from $Q Z$ with the following branching rules: \\

$\alpha = Q (X \supset Y)$ has direct consequences $\alpha_0 = QQ X$ and $\alpha_1 = Q Y$; \\

$\beta = QQ (X \supset Y)$ has alternative consequences $\beta_0 = Q X$ and $\beta_1 = QQ Y$. \\

\noindent
Each branch $\theta$ of $\mathcal{T}_Q$ is a sequence $(Z_N, \dots , Z_0)$ with $Z_0 = Q Z$ and each term $Z_n$ of the form $Q W_n$ or $QQ W_n$ for some IPC formula $W_n$. Each branch $\theta$ of $\mathcal{T}_Q$ is closed in the sense that among its terms is a conjugate pair $Q W$, $QQ W$ for some IPC formula $W$. 

\medbreak 

\noindent
{\bf Remark:} We may instead define a $Q$-tableau for $Z$ as a tableau starting from $Q Z$ with the branching rules displayed above; it was simply easier to import the standard machinery of tableaux for signed formulas.

\medbreak 

Motivated by the construction in [6] for the classical Propositional Calculus, we associate to the IPC formula $Q$ an axiom system $\mathbb{U}_Q$ having the following axiom schemes and inference rules, throughout which $\theta = (Z_N, \dots , Z_0)$ stands for sequences of IPC formulas, each $Z_n$ being of the form $Q W_n$ or $QQ W_n$ for some IPC formula $W_n$, such a sequence $\theta$ being closed precisely when it has a conjugate pair $Q W, QQ W$ among its terms. \\

{\it Axioms}: All formulas $\CC (\theta) = Z_N \supset \cdots \supset Z_0 \supset Q$ for which $\theta = (Z_N, \dots \, Z_0)$ is closed. \\

{\it Rule A}: If $\alpha$ is a term of $\theta$ then from $\CC(\alpha_0, \theta)$ or $\CC(\alpha_1, \theta)$ (separately) infer $D(\theta)$. \\

{\it Rule B}: If $\beta$ is a term of $\theta$ then from $\CC(\beta_0, \theta)$ and $\CC(\beta_1, \theta)$ (together) infer $D(\theta)$. \\

As is the case for their counterpart in the classical Propositional Calculus [6], $Q$-tableaux and their axiom systems associated to IPC formulas $Q$ facilitate a proof that the Implicational Propositional Calculus is complete, as we now proceed to show. 

\medbreak 

\begin{theorem} \label{axioms}
Each axiom of $\mathbb{U}_Q$ is a theorem of IPC. 
\end{theorem}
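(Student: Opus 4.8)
The plan is to peel $\CC(\theta)$ apart with the Deduction Theorem and then read off $Q$ from the conjugate pair that makes $\theta$ closed. First I would fix a closed sequence $\theta = (Z_N, \dots, Z_0)$ and note that, by $N+1$ successive applications of DT, it is enough to produce the deduction $Z_N, \dots, Z_0 \vdash Q$; conversely, any such deduction is reassembled by $N+1$ applications of DT into $\vdash Z_N \supset (\cdots \supset (Z_0 \supset Q)\cdots)$, which is exactly $\vdash \CC(\theta)$.

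Next I would invoke the meaning of ``closed'': there is an IPC formula $W$ for which both $Q W$ and $Q Q W$ occur among the terms $Z_0, \dots, Z_N$. The single observation that drives the proof is that, under the abbreviation $Q Z := Z \supset Q$, the formula $Q Q W$ is literally $Q W \supset Q$. Hence the hypothesis list $Z_0, \dots, Z_N$ contains both $Q W$ and $Q W \supset Q$, so one application of MP yields $Q$. This is precisely the deduction $Z_N, \dots, Z_0 \vdash Q$ demanded above, whence $\CC(\theta) \in \mathbb{T}$ and the theorem follows.

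I do not expect any genuine obstacle here: the whole argument rests on the syntactic identity $Q Q W = Q W \supset Q$ together with routine bookkeeping of the Deduction Theorem, both of which are already in place in the excerpt. The only point deserving a moment's care is that the order in which the terms are listed in $\theta$ --- which does control the nesting pattern of $\CC(\theta)$ --- is irrelevant to the deduction of $Q$, since the two hypotheses $Q W$ and $Q Q W$ may be discharged in whatever order is convenient. (One could instead route the argument through Theorem \ref{vdash}, but the direct modus ponens is shorter.)
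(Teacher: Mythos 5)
Your proof is correct, but it takes a genuinely different route from the paper's. The paper applies Theorem \ref{vdash} to each member of the conjugate pair: since $Q W$ is a term of $\theta$ we get $QQ W \supset \CC(\theta) \in \mathbb{T}$, and since $QQ W$ is a term we get $QQQ W \supset \CC(\theta) \in \mathbb{T}$; it then concludes by the Remark following Theorem \ref{middle}, that is, by the Peirce scheme in its guise as the weak law of the excluded middle applied to $QQ W$. You instead exploit the syntactic identity $QQ W = Q W \supset Q$ directly: the closed hypothesis list $Z_N, \dots, Z_0$ contains both $Q W$ and $Q W \supset Q$, a single modus ponens yields $Q$, and $N+1$ applications of DT reassemble $\CC(\theta)$; your remark that the order of the terms (which fixes the nesting of $\CC(\theta)$ but not the availability of the hypotheses) is immaterial is accurate. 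Your argument is the more elementary of the two --- it needs neither Theorem \ref{vdash} nor any appeal to Peirce, only MP and the Deduction Theorem --- and indeed the key modus ponens you use is essentially the one occurring inside stage (3) of the proof of Theorem \ref{vdash}. What the paper's longer route buys is uniformity: Theorems \ref{axioms}, \ref{RuleA} and \ref{RuleB} are all concluded by the same weak-excluded-middle step, a structural point the closing discussion emphasizes when contrasting this approach with the dual-tableau approach of [4].
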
 

\begin{proof} 
Let the sequence $\theta = (Z_N, \dots , Z_0)$ contain both $Q W$ and $QQ W$ as terms: Theorem \ref{vdash} tells us that $QQ W \supset \CC (\theta) \in \mathbb{T}$ and $QQQ W \supset \CC (\theta) \in \mathbb{T}$; the Remark after Theorem \ref{middle} then places $\CC (\theta)$ in $\mathbb{T}$. 
\end{proof} 

\medbreak 

Rule A of $\mathbb{U}_Q$ may be regarded as a derived inference rule for IPC. 

\medbreak 

\begin{theorem} \label{RuleA} 
Let $\theta$ have $\alpha$ as a term. If $\CC(\alpha_0, \theta) \in \mathbb{T}$ or $\CC(\alpha_1, \theta) \in \mathbb{T}$ then $\CC (\theta) \in \mathbb{T}$. 
\end{theorem}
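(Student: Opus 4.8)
The plan is to work throughout in the deductive (hypothesis) formulation, reducing everything to a derivation of $Q$ from the antecedents of $\CC(\theta)$. Write $\theta = (Z_N, \dots , Z_0)$; the hypothesis that $\alpha$ is a term of $\theta$ means that $Z_k = \alpha = Q(X \supset Y)$ for some $k$ with $0 \leqslant k \leqslant N$, and recall $\alpha_0 = QQ X$ and $\alpha_1 = Q Y$. By $N+1$ applications of DT it suffices to prove $Z_N, \dots , Z_0 \vdash Q$, and in such a derivation we have in particular the formula $\alpha$ (namely $Z_k$) at our disposal.

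The only extra ingredient is Theorem \ref{extra}: scheme $({\rm A}_0)$ gives $Q(X \supset Y) \supset QQ X = \alpha \supset \alpha_0 \in \mathbb{T}$, and scheme $({\rm A}_1)$ gives $Q(X \supset Y) \supset Q Y = \alpha \supset \alpha_1 \in \mathbb{T}$. Suppose first that $\CC(\alpha_0, \theta) \in \mathbb{T}$; since $\CC(\alpha_0, \theta) = \alpha_0 \supset \CC(\theta)$, we argue under the hypotheses $Z_N, \dots , Z_0$ thus: MP applied to $Z_k = \alpha$ and $({\rm A}_0)$ yields $\alpha_0$; MP applied to $\alpha_0$ and $\alpha_0 \supset \CC(\theta)$ yields $\CC(\theta) = Z_N \supset (\cdots \supset (Z_0 \supset Q) \cdots)$; and $N+1$ further applications of MP, discharging $Z_N, \dots , Z_0$ in turn, yield $Q$. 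Hence $Z_N, \dots , Z_0 \vdash Q$ and DT gives $\CC(\theta) \in \mathbb{T}$. The case $\CC(\alpha_1, \theta) \in \mathbb{T}$ is word for word the same, with $({\rm A}_1)$ and $\alpha_1$ in place of $({\rm A}_0)$ and $\alpha_0$. (Note that Peirce enters — via Theorem \ref{Robbin} part (7) inside $({\rm A}_0)$ — only in the $\alpha_0$ branch.)

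There is no real obstacle here: the argument is essentially a contraction. The formula $\alpha$ sits among the antecedents of $\CC(\theta)$, while (through $({\rm A}_0)$ or $({\rm A}_1)$, in its derived form $\alpha_i$) it is also the antecedent of the hypothesized theorem $\CC(\alpha_i,\theta)$; so assuming $\alpha$ once is enough both to detach $\CC(\theta)$ and then to collapse it to $Q$. The one point demanding care is the bookkeeping with DT together with the fact that $\alpha$ genuinely occurs among the $Z_i$ — which is precisely why it is cleaner to manipulate the hypotheses $Z_N,\dots,Z_0$ directly rather than the nested implication $\CC(\theta)$.
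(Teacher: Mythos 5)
Your proof is correct, but it takes a genuinely different route from the paper's. The paper never unwinds $\CC(\theta)$ into its antecedents: it invokes Theorem \ref{vdash} to place $Q\alpha \supset \CC(\theta)$ in $\mathbb{T}$, uses HS together with $\alpha \supset \alpha_i \in \mathbb{T}$ to convert the hypothesis $\alpha_i \supset \CC(\theta) \in \mathbb{T}$ into $\alpha \supset \CC(\theta) \in \mathbb{T}$, and then concludes via the Remark after Theorem \ref{middle} --- that is, by a case split over the weak excluded middle $Q\alpha \vee \alpha$, which is Peirce in disguise. You instead exploit the fact that $\alpha$ occurs literally among the antecedents $Z_N, \dots , Z_0$: assuming these once, you obtain $\alpha_i$ from $\alpha$ by Theorem \ref{extra}, detach $\CC(\theta)$ from the hypothesized theorem $\CC(\alpha_i, \theta)$, collapse $\CC(\theta)$ back to $Q$ against the same assumptions, and finish with DT. This contraction argument is sound and is the more economical of the two: it needs neither Theorem \ref{vdash} nor the excluded-middle case split, and (as you observe) Peirce enters only through $({\rm A}_0)$ in the $\alpha_0$ branch. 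What the paper's heavier route buys is uniformity: Theorems \ref{axioms}, \ref{RuleA} and \ref{RuleB} are all deliberately concluded by the same appeal to the weak law of the excluded middle, a thematic point the paper stresses when contrasting this development with the dual-tableau approach of [4]. Note also that your trick would not banish Peirce from Theorem \ref{RuleB}: there the two hypotheses must still be combined through $\beta_0 \vee \beta_1$ and Theorem \ref{Elimination}, which itself rests on Peirce.
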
 

\begin{proof} 
Theorem \ref{vdash} guarantees that $Q \alpha \supset \CC (\theta) \in \mathbb{T}$. Theorem \ref{extra} parts (${\rm A}_0$) and (${\rm A}_1$) guarantee that $\alpha \supset \alpha_0 \in \mathbb{T}$ and $\alpha \supset \alpha_1 \in \mathbb{T}$; it follows from this by HS that  if $\alpha_0 \supset \CC(\theta) = \CC(\alpha_0, \theta) \in \mathbb{T}$ or $\alpha_1 \supset \CC(\theta) = \CC(\alpha_1, \theta) \in \mathbb{T}$ then $\alpha \supset \CC (\theta) \in \mathbb{T}$. Finally, the Remark after Theorem \ref{middle} places $\CC (\theta)$ in $\mathbb{T}$. 
\end{proof} 

\medbreak 

Rule B of $\mathbb{U}_Q$ may also be seen as a derived inference rule for IPC. 

\medbreak 

\begin{theorem} \label{RuleB} 
Let $\theta$ have $\beta$ as a term. If $\CC(\beta_0, \theta) \in \mathbb{T}$ and $\CC(\beta_1, \theta) \in \mathbb{T}$ then $\CC (\theta) \in \mathbb{T}$. 
\end{theorem}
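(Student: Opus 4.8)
The plan is to imitate the proof of Theorem~\ref{RuleA} as closely as possible, but with one essential modification. In Rule~A one has a single hypothesis $\CC(\alpha_i,\theta)$ and lifts it to $\alpha \supset \CC(\theta)$ using the theorem $\alpha \supset \alpha_i$ from Theorem~\ref{extra}; the naive analogue here would require $\beta \supset \beta_0$ and $\beta \supset \beta_1$ to be IPC theorems, and they are \emph{not} (the formula $\beta = QQ(X \supset Y)$ behaves like a disjunction of $\beta_0$ and $\beta_1$, not like their conjunction). So instead I would use the \emph{reverse} implication, namely Theorem~\ref{extra} part~$({\rm B})$, which gives $\beta \vdash \beta_1 \vee \beta_0$, together with the disjunction-elimination Theorem~\ref{Elimination}, to fuse the two hypotheses into a single statement about $\beta$.

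In detail the steps are: (i) since $\beta$ is a term of $\theta$, Theorem~\ref{vdash} supplies $Q\beta \supset \CC(\theta) \in \mathbb{T}$, exactly as in the proof of Theorem~\ref{RuleA}. (ii) From $\CC(\beta_0,\theta) = \beta_0 \supset \CC(\theta) \in \mathbb{T}$ and $\CC(\beta_1,\theta) = \beta_1 \supset \CC(\theta) \in \mathbb{T}$, MP yields $\beta_0 \vdash \CC(\theta)$ and $\beta_1 \vdash \CC(\theta)$, so Theorem~\ref{Elimination} yields $\beta_1 \vee \beta_0 \vdash \CC(\theta)$, that is, $QQ Y \vee Q X \vdash \CC(\theta)$; combining this with $\beta \vdash QQ Y \vee Q X$ from Theorem~\ref{extra}$({\rm B})$ and applying DT gives $\beta \supset \CC(\theta) \in \mathbb{T}$. (iii) With both $Q\beta \supset \CC(\theta) \in \mathbb{T}$ and $\beta \supset \CC(\theta) \in \mathbb{T}$ now in hand, the Remark after Theorem~\ref{middle} (applied with $Z := \beta$ and $W := \CC(\theta)$) places $\CC(\theta)$ in $\mathbb{T}$, which is the desired conclusion.

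The step I expect to be the crux is (ii): recognising that Theorem~\ref{extra}$({\rm B})$ is the correct bridge and that the two hypotheses must be routed through the disjunction $\beta_1 \vee \beta_0$ rather than applied directly to $\beta$. Once that is seen the rest is bookkeeping (one only has to note that $\beta_0 = Q X$ and $\beta_1 = QQ Y$, so the disjunction occurring in $({\rm B})$ is literally $\beta_1 \vee \beta_0$, which is all Theorem~\ref{Elimination} needs). It is precisely here that the Peirce scheme does its work, since Theorem~\ref{extra}$({\rm B})$ is established via Theorem~\ref{Elimination}, whose validity rests on Peirce --- in keeping with the emphasis announced in the introduction. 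One could also discharge step (iii) more cheaply, bypassing the Remark: because $\beta$ is literally a term of $\theta$, the theorem $\beta \supset \CC(\theta)$ collapses to $\CC(\theta)$ by assuming all the terms of $\theta$, deducing $\CC(\theta)$ from $\beta$ by MP, peeling off the hypotheses by MP, and then discharging them by DT. I would nonetheless prefer the Remark route for symmetry with the proof of Theorem~\ref{RuleA}.
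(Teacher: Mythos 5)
Your proposal is correct and follows essentially the same route as the paper's own proof: Theorem~\ref{vdash} for $Q\beta \supset \CC(\theta)$, Theorem~\ref{extra}(B) plus Theorem~\ref{Elimination} to obtain $\beta \supset \CC(\theta)$, and the Remark after Theorem~\ref{middle} to conclude. Your extra care about the order of the disjuncts in $({\rm B})$ and your noted shortcut for the final step are fine but do not change the argument.
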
 

\begin{proof} 
Theorem \ref{vdash} guarantees that $Q \beta \supset \CC (\theta) \in \mathbb{T}$. Theorem \ref{extra} part (B) guarantees that $\beta \supset (\beta_0 \vee \beta_1) \in \mathbb{T}$; consequently, if  $\beta_0 \supset \CC(\theta) =  \CC(\beta_0, \theta) \in \mathbb{T}$ and $\beta_1 \supset \CC(\theta) = \CC(\beta_1, \theta) \in \mathbb{T}$ then $(\beta_0 \vee \beta_1) \supset \CC(\theta) \in \mathbb{T}$ by Theorem \ref{Elimination} so that HS yields $\beta \supset \CC(\theta) \in \mathbb{T}$. Finally, the Remark after Theorem \ref{middle} places $\CC(\theta)$ in $\mathbb{T}$. 
\end{proof} 

\medbreak 

Taken together, Theorems \ref{axioms}, \ref{RuleA} and \ref{RuleB} establish that all theorems of $\mathbb{U}_Q$ are provable within IPC. 

\medbreak 

\begin{theorem} \label{UIPC}
Each theorem of $\mathbb{U}_Q$ is a theorem of IPC. 
\end{theorem}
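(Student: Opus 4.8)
The plan is to argue by induction on the structure (equivalently, the length) of a $\mathbb{U}_Q$-derivation, noting that the real work has already been done in Theorems \ref{axioms}, \ref{RuleA} and \ref{RuleB}: all that remains is to observe that IPC-theoremhood, being preserved by the axioms and by each of the two inference rules of $\mathbb{U}_Q$, propagates down any derivation.

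First I would unwind the definition: a theorem of $\mathbb{U}_Q$ is by definition the last formula of some finite sequence $\Phi_1, \dots, \Phi_k$ in which each $\Phi_i$ is either an axiom of $\mathbb{U}_Q$ or is obtained from one or two earlier formulas $\Phi_j$ (with $j < i$) by an application of Rule A or Rule B. I would then prove, by strong induction on $i$, the assertion that $\Phi_i \in \mathbb{T}$ for every $i$; specializing to $i = k$ yields the theorem.

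For the inductive step there are three cases. If $\Phi_i$ is an axiom of $\mathbb{U}_Q$ — that is, $\Phi_i = \CC(\theta)$ for some closed sequence $\theta$ — then $\Phi_i \in \mathbb{T}$ directly by Theorem \ref{axioms}. If $\Phi_i = \CC(\theta)$ is obtained by Rule A, so that some earlier formula equals $\CC(\alpha_0, \theta)$ or $\CC(\alpha_1, \theta)$ for a term $\alpha$ of $\theta$, then that earlier formula lies in $\mathbb{T}$ by the inductive hypothesis, and Theorem \ref{RuleA} places $\CC(\theta)$ in $\mathbb{T}$. If $\Phi_i = \CC(\theta)$ is obtained by Rule B from two earlier formulas $\CC(\beta_0, \theta)$ and $\CC(\beta_1, \theta)$ for a term $\beta$ of $\theta$, then both lie in $\mathbb{T}$ by the inductive hypothesis, and Theorem \ref{RuleB} places $\CC(\theta)$ in $\mathbb{T}$. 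These cases are exhaustive, which closes the induction.

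I do not expect any genuine obstacle: the entire mathematical content — that the axioms of $\mathbb{U}_Q$ are IPC-theorems and that Rules A and B are admissible for IPC — is exactly Theorems \ref{axioms}, \ref{RuleA} and \ref{RuleB}, and the present statement is the purely formal remark that admissibility of the rules lifts from a single inference step to an entire derivation. The one point requiring a little care is bookkeeping: making explicit that "$\mathbb{U}_Q$-derivation" means a finite sequence closed under the stated axioms and rules, so that the induction on derivation length is plainly legitimate, and matching the conclusion of each rule to the hypotheses of the corresponding preceding theorem.
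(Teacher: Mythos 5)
Your proof is correct and is essentially the paper's own argument: the paper simply states that $\mathbb{T}$ contains the axioms of $\mathbb{U}_Q$ (Theorem \ref{axioms}) and is closed under Rules A and B (Theorems \ref{RuleA} and \ref{RuleB}), which is precisely the content your induction on derivation length makes explicit. No difference of substance, only of verbosity.
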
 

\begin{proof} 
The set $\mathbb{T}$ of all IPC theorems contains the axioms of $\mathbb{U}_Q$ by Theorem \ref{axioms}; it is closed under Rules A and B according to Theorems \ref{RuleA} and \ref{RuleB}. 
\end{proof} 

Now, let us return to the IPC tautology $Z$ to which we associated a closed $Q$-tableau $\mathcal{T}_Q$ starting from $Z_0 = Q Z$. As each branch $\theta$ of $\mathcal{T}_Q$ is closed, each corresponding $\CC (\theta)$ is an axiom of $\mathbb{U}_Q$. We prune the tableau $\mathcal{T}_Q$ by reversing the steps by which it was formed: pruning $\theta$ applies an inference rule of $\mathbb{U}_Q$ to $\CC(\theta)$ and so results in a theorem of $\mathbb{U}_Q$; the final pruning lays bare the root $Z_0 \supset Q = Q Z \supset Q = QQ Z$ which is then itself a theorem of $\mathbb{U}_Q$. Conclusion: if $Z$ is an IPC tautology, then $QQ Z = (Z \supset Q) \supset Q$ is a theorem of $\mathbb{U}_Q$. 

\medbreak 

It is now a short step to completeness of IPC. 

\begin{theorem} 
IPC is complete. 
\end{theorem}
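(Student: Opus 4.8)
The plan is to cash in the conclusion reached just before the statement, namely that for \emph{every} IPC tautology $Z$ and \emph{every} choice of $Q$ the formula $QQ Z = (Z \supset Q) \supset Q$ is a theorem of $\mathbb{U}_Q$ and hence, by Theorem~\ref{UIPC}, a theorem of IPC. Recall first that completeness of IPC means that every IPC tautology lies in $\mathbb{T}$ (the converse, soundness, being the routine induction on deductions that checks each axiom scheme is a tautology and that MP preserves tautologyhood; we take it for granted). So let $Z$ be an arbitrary IPC tautology; we must show $\vdash Z$.

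The one new idea is to choose $Q$ cleverly: take $Q := Z$. Then the construction preceding the statement, applied with this $Q$, produces a closed $Z$-tableau starting from $Z_0 = Q Z = Z \supset Z$, and its successive pruning exhibits the root $QQ Z = (Z \supset Q) \supset Q = (Z \supset Z) \supset Z$ as a theorem of $\mathbb{U}_Z$. By Theorem~\ref{UIPC} this root is a theorem of IPC, i.e. $\vdash (Z \supset Z) \supset Z$. Since $\vdash Z \supset Z$ is a standard IPC theorem (derived from $({\rm IPC}_1)$ and $({\rm IPC}_2)$ in the usual way, as in Exercises~6.3--6.5 of~[2]), modus ponens yields $\vdash Z$. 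As $Z$ was an arbitrary tautology, IPC is complete.

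I do not anticipate any real obstacle: all the substantive work — Theorems~\ref{axioms}, \ref{RuleA}, \ref{RuleB}, hence Theorem~\ref{UIPC}, together with the pruning argument — is already in hand, and the final step is the one-line observation that specializing $Q$ to $Z$ converts the ``double-negation'' theorem $QQ Z$ into a formula from which $Z$ follows by MP against $Z \supset Z$. This is precisely parallel to the remark after Theorem~\ref{Robbin} that Peirce's scheme is recovered from part~(7) by setting $Q = X$ and using $\vdash X \supset X$, so it is worth flagging that the self-referential choice $Q := Z$ is legitimate because the tableau $\mathcal{T}_Q$ and the axiom system $\mathbb{U}_Q$ are defined relative to a freely chosen $Q$, re-chosen afresh for each tautology under consideration.
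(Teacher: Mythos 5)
Your proof is correct and takes essentially the same route as the paper: choose $Q := Z$, invoke the pruning conclusion together with Theorem~\ref{UIPC} to get $\vdash (Z \supset Z) \supset Z$, and finish with one application of MP. The only (immaterial) difference is the final detachment — you use $\vdash Z \supset Z$ as the minor premise, whereas the paper detaches against the Peirce instance $[(Z \supset Z) \supset Z] \supset Z$; both are valid one-line conclusions.
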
 

\begin{proof} 
Let $Z$ be an IPC tautology and take $Q:= Z$. As we have just seen, $(Z \supset Z) \supset Z$ is a theorem of $\mathbb{U}_Z$ and therefore a theorem of IPC by Theorem \ref{UIPC}. The proof is concluded by an application of MP to $(Z \supset Z) \supset Z$ and the specific Peirce axiom $[(Z \supset Z) \supset Z] \supset Z$. 
\end{proof} 

\bigbreak 

In closing, we note that there are significant differences between the approach to IPC completeness via $Q$-tableaux offered here and the approach via dual $Q$-tableaux offered in [4]. One difference relates to the r\^ole played by the Peirce axiom scheme: Theorems \ref{axioms}, \ref{RuleA} and \ref{RuleB} of the present paper are all concluded by an application of the Peirce axiom scheme in its guise as a weak law of the excluded middle; by contrast, the corresponding results in [4] hinge on various parts of Exercise 6.3 in [2]. Another difference relates to conjunction (`negated') and disjunction: in $Z_N \supset \cdots \supset Z_0 \supset Q$ new terms are added on the left while in $Z_0 \vee \cdots \vee Z_N$ they are added on the right; this difference is reflected in the definition and properties of the corresponding axiom systems. We leave the details of a full comparison to the reader.

\bigbreak

\newpage

\begin{center} 
{\small R}{\footnotesize EFERENCES}
\end{center} 
\medbreak 

[1]  A. Church, {\it Introduction to Mathematical Logic}, Princeton University Press (1956). 

[2] J. W. Robbin, {\it Mathematical Logic - A First Course}, W.A. Benjamin (1969); Dover Publications (2006).

[3] P.L. Robinson, {\it The Peirce axiom scheme and suprema}, arXiv 1511.07074 (2015). 

[4] P.L. Robinson, {\it Implicational Propositional Calculus - Tableaux and Completeness}, arXiv 1512.00091v2 (2015).  

[5] R.M. Smullyan, {\it First-Order Logic}, Springer-Verlag (1968); Dover Publications (1995). 

[6] R.M. Smullyan, {\it A Beginner's Guide to Mathematical Logic}, Dover Publications (2014).

\medbreak

\end{document}